\newcommand{\R}{\mathbb{R}}
\newcommand{\Q}{\mathbb{Q}}
\newcommand{\N}{\mathbb{N}}
\newcommand{\calA}{\mathcal{A}}
\newcommand{\calF}{\mathcal{F}}
\newcommand{\calK}{\mathcal{K}}
\newcommand{\calM}{\mathcal{M}}
\newcommand{\calR}{\mathcal{R}}
\numberwithin{equation}{section}
\newcommand{\ud}[0]{\,\mathrm{d}}
\newcommand{\esssup}[0]{\operatornamewithlimits{ess\,sup}}
\newcommand{\abs}[1]{|#1|}
\newcommand{\OpN}[1]{| #1 |_{\operatorname{op}}}
\newcommand{\conv}[0]{\operatorname{conv}}
\newcommand{\loc}[0]{\operatorname{loc}}
\newcommand{\sign}[0]{\operatorname{sgn}}
\theoremstyle{plain}
\newtheorem{thm}[equation]{Theorem}
\newtheorem{lem}[equation]{Lemma}
\theoremstyle{definition}
\theoremstyle{remark}
\newtheorem{rem}[equation]{Remark}
\title{The Strong Matrix Weighted Maximal Operator}
\author{Emil Vuorinen}
\address{Department of Mathematics and Statistics, University of Helsinki, P.O.B. 68, FI-00014 University of Helsinki, Finland}
\email{emil.vuorinen@helsinki.fi}
\subjclass[2020]{42B20, 42B25}
\keywords{multi-parameter analysis, strong maximal function, matrix weights, weighted norm inequalities, extrapolation}
\thanks{The author was supported by the Academy of Finland via the Finnish Centre of Excellence in Randomness and Stuctures (project 346314).}
\begin{document}

\begin{abstract}
The purpose of this note is to prove that the strong Christ-Goldberg maximal function is bounded.
This is a matrix weighted maximal operator appearing in the theory of matrix weighted norm inequalities.
Related to this we record the Rubio de Francia extrapolation theorem with
bi-parameter matrix weights.
\end{abstract}

\maketitle

\tableofcontents

\section{Introduction}

This note is related to \emph{matrix weighted norm inequalities}. 
Let $W \colon \R^n \to \calM_d$ be a measurable mapping, where $\calM_d$ is the collection of $d \times d$ matrices
with real entries. We assume that $W(x)$ is symmetric and positive definite for almost every $x$. Such
mappings are called \emph{matrix weights}. The theory of matrix weighted norm inequalities studies
for example estimates of the form
\begin{equation}\label{eq17}
\int_{\R^n} | W(x)^{1/p} Tf(x)|^p \ud x
\lesssim  \int_{\R^n} |W(x)^{1/p} f(x) |^p\ud x.
\end{equation}
Here $p \in (1, \infty)$, $f = (f_1, \dots, f_d) \colon \R^n \to \R^d$, 
$T$ is some linear operator of Harmonic Analysis such as a singular integral, 
$Tf:= (Tf_1, \dots, Tf_d)$ and $|\cdot|$ denotes the Euclidean norm on $\R^d$. 
We refer for example to the introduction of the recent article Bownik, Cruz-Uribe \cite{BC} and the 
references therein for information on this topic and its history.

We break with the usual convention of writing matrix weighted estimates as in \eqref{eq17}.
Instead, we adopt the formulation
\begin{equation}\label{eq18}
\int_{\R^n} | W(x) Tf(x)|^p \ud x
\lesssim  \int_{\R^n} |W(x) f(x) |^p\ud x,
\end{equation}
which was used also in \cite{BC}; see the Introduction of \cite{BC}. 
One reason 
to use this formulation is that it fits well with \emph{extrapolation}, which we 
discuss more below.
Since we are going to state a bi-parameter extrapolation theorem
we will write matrix weighted estimates in the form \eqref{eq18}. 

A certain matrix weighted maximal operator was introduced in Christ, Goldberg \cite{CG} and
Goldberg \cite{Goldberg}. Let $W$ be a matrix weight. The \emph{Christ-Goldberg maximal operator}
is defined by
\begin{equation}\label{eq:DefM_W}
M_W f(x):= \sup_{Q \owns x} \fint_Q | W(x) W(y)^{-1} f(y) | \ud y.
\end{equation}
Here again $f \colon \R^n \to \R^d$, the supremum is over cubes $Q \subset \R^n$, $Q \owns x$, with
sides parallel to the axes, $\fint_Q = \frac{1}{m(Q)} \int_Q$ is the average over $Q$ and $m$ is the
Lebesgue measure. We point out that using \eqref{eq18} instead of \eqref{eq17}
affects the definition of $M_W$. (The definition of $M_W$ corresponding to the 
formulation \eqref{eq17} is obtained by replacing $W$ by $W^{1/p}$ in \eqref{eq:DefM_W}, see for example \cite{Goldberg}.)
Notice that $M_Wf(x) \in [0, \infty]$. It was shown in \cite{CG,Goldberg} that the estimate
\begin{equation}\label{eq13}
\int_{\R^n} | M_W f(x)|^p \ud x
\lesssim \int_{\R^n} | f (x) | ^p \ud x
\end{equation}
holds for $p \in (1, \infty)$,
if $W$ is a \emph{matrix $\calA_p$ weight} (see Section \ref{sec1} for  definitions).

In this note we consider the \emph{strong Christ-Goldberg maximal function} $M_W^s$,
also called the \emph{bi-parameter Christ-Goldberg maximal function}. 
Let $\R^n = \R^{n_1} \times \R^{n_2}$. The collection of \emph{bi-parameter rectangles} $\calR$
consists of the rectangles $R=Q_1 \times Q_2\subset \R^n$, where $Q_i \subset \R^{n_i}$ is a cube.
The maximal operator $M_W^s$ is defined as the one-parameter version $M_W$ in \eqref{eq:DefM_W}
except that one takes the supremum over rectangles $R \in \calR$, $R \owns x$,  instead of cubes $Q$.
The main purpose of this note is to prove the following theorem.

\begin{thm}\label{thm:StrongCG}
Let $p \in (1, \infty]$ and $W \in \calA_p(\R^{n_1} \times \R^{n_2})$. Then 
$$
\| M_W^s f \|_{L^p(\R^n)} 
\le C(n,d,p)[W]_{\calA_p(\R^{n_1} \times \R^{n_2})}^{2p'} \| f \|_{L^p(\R^n;\R^d)}.
$$
If $p=\infty$, the estimate holds with $[W]_{\calA_\infty(\R^{n_1} \times \R^{n_2})}$ instead of 
$[W]_{\calA_\infty(\R^{n_1} \times \R^{n_2})}^2$.
\end{thm}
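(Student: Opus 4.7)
The plan is to reduce the bi-parameter bound to two applications of the one-parameter Christ--Goldberg estimate, one in each coordinate direction, in the spirit of the scalar identity $M^s = M^{(1)} \circ M^{(2)}$. The main one-parameter ingredients I would invoke are the quantitative vector bound
\[
\|M_V f\|_{L^p(\R^m;\R^d)} \le C_{m,d,p}\,[V]_{\calA_p(\R^m)}^{p'}\,\|f\|_{L^p(\R^m;\R^d)}
\]
together with its scalar operator-norm companion
\[
\Bnorm{\sup_{Q \owns x} \fint_Q \OpN{V(x) V(y)^{-1}} g(y)\ud y}{L^p(\R^m)} \le C_{m,d,p}\,[V]_{\calA_p(\R^m)}^{p'}\,\|g\|_{L^p(\R^m)}\quad (g \ge 0),
\]
both of which follow via the reducing-matrix reduction to a scalar Muckenhoupt $A_p$ maximal function along the lines of Bownik--Cruz-Uribe.

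The iteration itself comes from the matrix factorization
\[
W(x)W(y)^{-1} = \bigl[W(x_1,x_2)W(y_1,x_2)^{-1}\bigr]\bigl[W(y_1,x_2)W(y_1,y_2)^{-1}\bigr],
\]
which, upon applying it to $f(y)$ and taking Euclidean norms, yields
\[
|W(x)W(y)^{-1}f(y)| \le \OpN{W(x_1,x_2)W(y_1,x_2)^{-1}}\cdot |W(y_1,x_2)W(y_1,y_2)^{-1}f(y_1,y_2)|.
\]
Averaging over $R = Q_1 \times Q_2 \owns x$ and integrating in $y_2$ first, the second factor is absorbed into the one-parameter Christ--Goldberg operator in the second variable applied to the slice $f(y_1,\cdot)$; writing $G(y_1,x_2) := M_{W(y_1,\cdot)}[f(y_1,\cdot)](x_2)$ and taking the remaining supremum over $Q_1$ gives the pointwise bound
\[
M_W^s f(x_1,x_2) \le \sup_{Q_1\owns x_1}\fint_{Q_1}\OpN{W(x_1,x_2)W(y_1,x_2)^{-1}}\,G(y_1,x_2)\,\ud y_1.
\]

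Finally, I would conclude by Fubini combined with the standard slice inheritance $[W(y_1,\cdot)]_{\calA_p(\R^{n_2})},\,[W(\cdot,x_2)]_{\calA_p(\R^{n_1})} \le [W]_{\calA_p(\R^{n_1}\times\R^{n_2})}$, valid for a.e. slice. Applying the scalar operator-norm estimate above in the $x_1$-variable slicewise gives $\|M_W^s f\|_{L^p(\R^n)} \le C[W]_{\calA_p}^{p'}\|G\|_{L^p(\R^n)}$, and applying the vector-valued estimate in the $x_2$-variable slicewise gives $\|G\|_{L^p(\R^n)} \le C[W]_{\calA_p}^{p'}\|f\|_{L^p(\R^n;\R^d)}$; composing produces the claimed $[W]_{\calA_p}^{2p'}$ constant. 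For $p = \infty$ one pulls $\|f\|_\infty$ out at the outset and reduces to bounding $\sup_R \fint_R \OpN{W(x)W(y)^{-1}}\ud y$ in $L^\infty$; the analogous two-step iteration using the $\calA_\infty$-characteristic (rather than the one-parameter $\calA_p$-norm of the maximal operator) costs only a single power, giving the linear $[W]_{\calA_\infty}$ bound. The main obstacle is Step~1, the one-parameter scalar operator-norm estimate with the sharp $p'$ exponent; once that is in hand, the pointwise factorization and the slice inheritance turn the rest of the argument into bookkeeping.
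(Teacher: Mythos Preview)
Your argument is correct, but it takes a genuinely different route from the paper's.

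The paper does \emph{not} iterate $M_W^s$ directly. Instead it passes to the convex set--valued maximal operator $M_\calK^s$, proves the set-theoretic inclusion $M_\calK^s F\subset M_\calK^1 M_\calK^2 F$ (which is the analogue of $M^s\le M^1M^2$ that survives in the set-valued world), applies the known one-parameter bound for $M_\calK$ in each variable, and then comes back to $M_W^s$ via the pointwise equivalence $M_W^s f(x)\sim_d |W(x)M_\calK^s(W^{-1}F)(x)|$ with $F=\conv\{-f,f\}$. The paper explicitly frames the convex set--valued machinery as the device that makes iteration possible in the matrix case.

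Your route bypasses that machinery entirely: the factorisation $W(x)W(y)^{-1}=[W(x_1,x_2)W(y_1,x_2)^{-1}][W(y_1,x_2)W(y_1,y_2)^{-1}]$ lets you peel off the $x_2$-variable as an honest one-parameter Christ--Goldberg operator, at the cost of replacing the outer step by the scalar operator-norm maximal function. That ``companion'' estimate, which you flag as the main obstacle, is in fact an immediate corollary of the vector Christ--Goldberg bound itself: since $\OpN{A}\le\sum_{i=1}^d |Ae_i|$ one has $\sup_{Q\owns x}\fint_Q\OpN{V(x)V(y)^{-1}}g(y)\ud y\le\sum_i M_V(e_i g)(x)$ for $g\ge 0$, and each summand is handled by the $[V]_{\calA_p}^{p'}$ bound of Isralowitz--Moen. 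So your ``Step 1'' is not really an obstacle. (One small slip: the slice inheritance from Lemma~\ref{lem:UniformAp} carries a constant $C(n,d,p)$, not $1$, but this is harmless.)

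What each approach buys: yours is more elementary and self-contained for the statement of Theorem~\ref{thm:StrongCG} alone. The paper's route, however, simultaneously yields the bound for $M_\calK^s$ (Theorem~\ref{thm:StrongConvex}), which is what is actually needed for the Rubio de Francia extrapolation with bi-parameter matrix weights; your argument does not produce that as a by-product. For $p=\infty$ both proofs reduce to the same one-line application of the $\calA_\infty$ definition.
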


We point out that it is not known if the constant obtained in Theorem \ref{thm:StrongCG} is sharp 
in terms of the exponent of $[W]_{\calA_p(\R^{n_1} \times \R^{n_2})}$; see Remark \ref{rem:sharpness}.

In the scalar case $d=1$ it is well known that estimates for bi-parameter maximal functions
follow from the corresponding one-parameter estimates via iteration. Indeed, let
\begin{equation}\label{M^s}
M^s f(x)=\sup_{R \owns x} \fint_R | f(y) | \ud y
\end{equation}
be the usual bi-parameter maximal function, 
where $f \colon \R^n \to \R$. Then
one sees that
\begin{equation}\label{eq14}
M^s f(x) \le M^1 M^2 f(x),
\end{equation}
where $M^i$ is the one-parameter maximal function acting on the parameter $i$. For example,
$$
M^2f(x_1,x_2):= \sup_{\substack{Q \subset \R^{n_2} \\ Q \owns x_2}} \fint_{Q} |f(x_1, y_2)| \ud y_2.
$$
Weighted estimates 
$$
\int_{\R^n} |w(x) M^s f (x)|^p \ud x
\lesssim \int_{\R^n} |w(x) f(x)|^p \ud x,
$$
where $p \in (1, \infty)$ and $w$ is a bi-parameter $\calA_p$ weight, 
follow from the domination \eqref{eq14} and the corresponding one-parameter estimates.
This uses the well known fact that if $w$ is a bi-parameter $\calA_p$ weight, then
$w(\cdot, x_2)$ and $w(x_1, \cdot)$ are one-parameter $\calA_p$ weights, see Lemma \ref{lem:UniformAp}.

However, in the matrix weighted case $d \ge 2$ iteration such as \eqref{eq14} does not work in the same way, and so
estimates for $M_W^s$ do not directly follow from the estimates of $M_W$. 
In \cite{BC} the \emph{Rubio de Francia extrapolation theorem} was proved for matrix weights. 
Exrapolation is a method which implies weighted estimates in a range of exponents from
weighted estimates with a single exponent.
This important result had been long missing in the matrix weighted theory. 
One of the key ingredients in \cite{BC} is to use a certain \emph{convex set-valued maximal operator $M_\calK$}.
The operator $M_\calK$ acts on set-valued functions $F$ giving set-valued functions $M_\calK F$.
This allows iteration: one can form the functions $M_\calK F$, $M_\calK M_\calK F=M_\calK^2 F$ and so on,
which is important for the proof of the extrapolation.

Since the convex set-valued maximal operator works well with iteration, 
we can prove matrix weighted estimates for the bi-parameter version $M_\calK^s$ of the operator $M_\calK$.
Indeed, if $F$ is a suitable set-valued function,
then
$$
M_\calK^s F(x) \subset M_\calK^1 M_\calK^2 F (x),
$$
where $M_\calK^i$ is the one-parameter operator acting in the parameter $i$.
This corresponds to \eqref{eq14} and is proved in Lemma \ref{lem:BiParDomination}. From here one 
obtains matrix weighted estimates for $M^s_\calK$
from the corresponding estimates of $M_\calK$ in a similar manner as in the case $d=1$ described above.
This is done in Theorem \ref{thm:StrongConvex}.

The convex  set-valued maximal operators and the Christ-Goldberg maximal functions
are equivalent in a certain pointwise manner, see Lemma \ref{lem1} and Equation \eqref{eq4}. 
Therefore, the matrix weighted estimates we obtain
for $M_\calK^s$ imply Theorem \ref{thm:StrongCG}, which is proved in Section \ref{sec:MainThms}.
In Theorem \ref{thm1} we record the fact that given a bi-parameter matrix  $\calA_p$ weight $W$,
there exists a $\delta > 0$ such that $M_W^s$ is bounded in $L^q$ for those $q \in (1, \infty)$
such that $|q-p| < \delta$; the one-parameter version of this was proved in \cite{Goldberg}.

Once one has the matrix weighted estimates for $M_\calK^s$, 
the extrapolation with bi-parameter matrix weights can be proved precisely as the 
one-parameter case in \cite{BC}. This is stated in Theorem \ref{thm:BiParExtrap}.
As a corollary of this result, we record in Theorem \ref{thm:BiParJourne} the fact that \emph{bi-parameter Journ\'e operators}
satisfy matrix weighted estimates in the full range $p \in (1, \infty)$. This is a result which
had been proved in the case $p=2$ in Domelevo, Kakaroumpas, Petermichl, Soler i Gibert \cite[Theorem 1.1]{DKPS}
(in \cite{DKPS} the result is proved for general $p$ if $T$ is paraproduct free);
see the discussion in Section \ref{sec:bi-parExtra}.

\subsection*{Acknowledgements}
The author thanks Timo H\"anninen and Tuomas Hyt\"onen for discussions around this topic.
The author also thanks the referee for valuable suggestions which improved the paper.

\section{Definitions and Background}\label{sec1}

We begin by setting up notation and collecting background results. 

\subsection{Matrix weights}\label{sec:MatrixWeights}
For more information on matrix weighted estimates and their history we refer for example to the recent paper
\cite{BC} and the references therein.  Here we only recall the definitions needed for this note.

Let $\calM_d$ denote the collection of all $d \times d$ matrices with  real entries. 
A matrix weight $W$ is a measurable function $W \colon \R^n \to \calM_d$
such that $W(x)$ is positive definite for almost every $x$. Let $p \in (1, \infty)$.  
A matrix weight $W$ is in the class $\calA_p$ if
\begin{equation}\label{eq15}
[W]_{\calA_p}
:= \sup_{Q} \Big( \fint_Q \Big( \fint_Q \OpN{W(x)W(y)^{-1}} ^{p'} \ud y \Big)^{p/p'} \ud x \Big)^{1/p} < \infty,
\end{equation}
where the supremum is over cubes in $\R^n$ with sides parallel to the axes. The norm $\OpN{\cdot}$
denotes the operator norm of a matrix.
The weight $W$ is in $\calA_\infty$ if 
$$
[W]_{\calA_\infty}
:= \sup_Q \esssup_{x \in Q} \fint_Q \OpN{W(x)W(y)^{-1}} \ud y < \infty,
$$
and it is in $\calA_1$ if
$$
[W]_{\calA_1} := \sup_Q \esssup_{y \in Q} \fint_Q \OpN{W(x) W(y)^{-1}} \ud x < \infty.
$$

The above definition in the case $p \in (1, \infty)$
is due to Roudenko \cite{Roudenko} and the case $p=1$ is from Frazier, Roudenko \cite{FR}.
The class $\calA_\infty$ is introduced in \cite{BC}.
There are different equivalent ways to characterise matrix $\calA_p$  weights, 
see for example \cite[Proposition 6.5]{BC}.

The usual way to state the condition for $p \in (1, \infty)$,
which is related to writing matrix weighted estimates as in \eqref{eq17},
is to say that a
matrix weight $W$ is in $A_p$ if
$$
[W]_{A_p}
:= \sup_{Q}  \fint_Q \Big( \fint_Q \OpN{W(x)^{1/p}W(y)^{-1/p}} ^{p'} \ud y \Big)^{p/p'} \ud x < \infty.
$$
So we see that if $p \in (1, \infty)$, $W \in \calA_p$ if and only if $W^p \in A_p$ and 
$[W]_{\calA_p}=[W^p]_{A_p}^{1/p}$. 

\subsection{The Christ-Goldberg maximal function}\label{sec:CGMax}
The Christ-Goldberg maximal function $M_W$ 
was defined in \eqref{eq:DefM_W}.
Let $p \in (1, \infty)$ and $W \in \calA_p$ be a matrix weight. 
In \cite[Theorem 3.2]{Goldberg} it was proved that there exists a $\delta > 0$ such that
\begin{equation}\label{eq16}
\| M_{W}f\|_{L^q(\R^n)} \le C(q) \| f\|_{L^q(\R^n;\R^d)}
\end{equation}
for all $q \in (1, \infty)$ such that $|q-p| < \delta$. 
The proof of \eqref{eq16} in \cite{Goldberg} is based on reverse H\"older 
inequalities of scalar ($d=1$) weights. 

Following the proof in \cite{Goldberg} using a quantitative version
of the reverse H\"older inequality, see for example Grafakos \cite[Theorem 7.2.2, Remark 7.2.3]{Grafakos},
the estimate for $M_W$ can be rewritten as follows.
Let $p \in (1, \infty)$ and $B > 1$. 
There exists a $\delta =  \delta(n,d,p,B)> 0$ so that
if $W \in \calA_p$ is a matrix weight such that $[W]_{\calA_p} \le B$, then
\begin{equation}\label{eq9}
\| M_{W}f\|_{L^q(\R^n)} \le C(n,d,q,B) \| f\|_{L^q(\R^n;\R^d)}
\end{equation}
for all $q$ such that $|q-p| < \delta$.
This more precise formulation will have relevance in Theorem \ref{thm1}.

The quantitative estimate
\begin{equation}\label{eq6}
\| M_W f \|_{L^p(\R^n)} \le C(n,d,p)[W]_{\calA_p}^{p'} \| f \|_{L^p(\R^n; \R^d)}
\end{equation}
was proved in Isralowitz, Moen \cite{IM} for $p \in (1, \infty)$ and in \cite{BC} for $p=\infty$.

\subsection{Set-valued functions}
Let $\calK(\R^d)$ denote the collection of closed subsets of $\R^d$. 
We will consider measurable functions $F \colon \R^n \to \calK(\R^d)$. 
For the definition of measurability in this context see for example \cite[Definition 3.1]{BC}. 
Different ways to characterise
the measurability of such functions are stated in \cite[Theorem 3.2]{BC}, whose proof can be found in 
Aubin, Frankowska \cite[Theorems 8.1.4, 8.3.1]{AF}. Here we use the following: 
A function $F \colon \R^n \to \calK(\R^d)$ is measurable if and only if there exist measurable
functions $f_k \colon \R^n \to \R^d$, $k \in \N$, such that
$$
F(x)=\overline{\{f_k(x)\colon k \in \N\}}, \quad \text{for every } x \in \R^n.
$$

If $A \subset \R^d$, we define
$$
|A| = \sup\{| a | \colon a \in A\}.
$$
This is not to be confused with the Lebesgue measure of a set.
A measurable function $F \colon \R^d \to \calK(\R^d)$ is \emph{integrably bounded}
if there exists a non-negative function $k \in L^1(\R^n)$ so that $|F(x)| \le k(x)$ for every $x$; 
we say that $F$ is \emph{locally integrably bounded} if $k \in L^1_{\loc}(\R^n)$.

Next we define the \emph{Aumann integral} of set-valued functions.
Let $F \colon \R^n \to \calK(\R^d)$ be measurable. We denote by $S^0(F)$ the collection of 
\emph{measurable selection functions of $F$}. These are the measurable vector-valued functions
$f \colon \R^n \to \R^d$ such that $f(x) \in F(x)$ for every $x$. The set of \emph{integrable selection functions},
that is, those functions $f \in S^0(F)$ such that $\int | f | < \infty$, 
is denoted by $S^1(F)$. The Aumann integral of $F$ is defined by
$$
\int F \ud x
=\Big\{ \int f \ud x \colon f \in S^1(F)\Big\}.
$$
In general, $F$ may not have integrable selection functions. If $F$ is integrably bounded, then
$S^0(F)=S^1(F)$. For further properties of the Aumann integral we refer to \cite{AF} and \cite{BC}.

We will denote the collection of bounded, convex and symmetric sets in $\calK(\R^d)$,
that is, compact, convex and symmetric sets of $\R^d$,  by $\calK_{bcs}(\R^d)$. 
Let $W \colon \R^n \to \calM_d$ be
a matrix weight.
For $p \in (0, \infty]$ the \emph{$L^p$ space $L^p_\calK(\R^n,W)$ of convex set-valued functions} (see \cite[Definition 4.7]{BC})
is the collection of
those measurable $F \colon \R^n \to \calK_{bcs}(\R^d)$ such that
$$
\| F \|_{L^p_\calK(\R^n,W)} :=  \| |W F | \|_{L^p(\R^n)} < \infty,
$$
where the last quantity is the usual $L^p$ norm of the scalar function $|WF|$
and
$$
|W(x)F(x)|
=\sup\{ | a | \colon a \in W(x) F(x)\}
=\sup \{|W(x)b| \colon b \in F(x)\}.
$$
We refer to \cite{BC} for further properties of $L^p_\calK(\R^n,W)$. If 
$W(x) = I(x)$ (the identity matrix) for every $x$, then we denote 
$L^p_\calK(\R^n,W)=L^p_\calK(\R^n)$.

\subsection{The convex set-valued maximal operator}\label{sec:CSMO}
If $E \subset \R^n$ is a set with positive and finite measure, 
and $f \colon \R^n \to \R^d$ is integrable over $E$,
we will denote the average of $f$ over $E$ by
$$
\langle f \rangle_E = \fint_E f \ud x
=\frac{1}{m(E)} \int_E f \ud x.
$$
Let $F \colon \R^n \to \calK_{bcs}(\R^d)$ be locally integrably bounded.
If $Q \subset \R^n$ is a cube, then
$$
\langle F \rangle_Q
:= \frac{1}{m(Q)} \int 1_Q F \ud x
= \{ \langle f \rangle_Q \colon f \in S^0(F)\},
$$
where the last equality follows from the definition of the Aumann integral.
Notice that since $F$ is locally integrably bounded, every $ f \in S^0(F)$ is locally integrable, and therefore
we can use $S^0(F)$ in the above formula instead of $S^1(1_QF)$.
The convex set-valued maximal operator 
is defined
by
$$
M_\calK F(x)
:= \overline{\conv}\Big(\bigcup_{Q \subset \R^n} \langle F \rangle_Q 1_Q(x) \Big)
=\overline{\conv}(\{ \langle f \rangle_Q 1_Q(x) \colon Q \subset \R^n, f \in S^0(F)\} ),
$$
where $Q$ ranges over cubes $Q \subset \R^n$ and $\overline{\conv}$ denotes 
the closed convex hull.
See \cite[Definition 5.4]{BC}.

By \cite[Proposition 5.7]{BC} $M_\calK F \colon \R^n \to \calK_{cs}(\R^d)$ is measurable,
where $\calK_{cs}(\R^d)$ consists of the closed, convex and symmetric sets. By \cite[Lemma 5.5]{BC}
$M_\calK$ is sublinear in the sense that if
$F,G \colon \R^n \to \calK_{bcs}(\R^d)$ are locally integrably bounded and $\alpha \in \R$,
then there holds that $M_\calK(F+G) \subset M_\calK F + M_\calK G$ and $M_\calK (\alpha F ) = \alpha M_\calK F$. Also, $M_\calK$ is
monotone: if $F \subset G$, then $M_\calK F \subset M_\calK G$.

In \cite[Theorem 6.9]{BC} it was shown that if $p \in (1, \infty]$ and $W \in \calA_p$, then
$M_\calK \colon L^p_{\calK}(\R^n,W) \to L^p_{\calK}(\R^n,W)$ and
\begin{equation}\label{eq:ConvexMBounded}
\| M_\calK F \|_{L^p_{\calK}(\R^n,W)} 
\le C(n,d,p) [W]_{\calA_p}^{p'} \| F \|_{L^p_{\calK}(\R^n,W)}. 
\end{equation}

\subsection{Bi-parameter matrix weights}

Let $\R^n=\R^{n_1} \times \R^{n_2}$, where $n_1, n_2 \in \{1,2,3\dots\}$. We often write 
a point $x \in \R^n$ as $x=(x_1,x_2)$, where $x_i \in \R^{n_i}$. 
Recall that $\calR$ is the collection of bi-parameter rectangles $R=Q_1\times Q_2$, 
where $Q_i \subset \R^{n_i}$ is a cube.

For $p \in [1, \infty]$ the class $\calA_p(\R^{n_1} \times \R^{n_2})$ 
of \emph{bi-parameter matrix weights} is defined like the one-parameter matrix $\calA_p$ weights 
in Section \ref{sec:MatrixWeights} except for replacing cubes with rectangles. For example,
if $p \in (1, \infty)$, a matrix weight $W \colon \R^{n_1} \times \R^{n_2} \to \calM_d$ 
is in $\calA_p(\R^{n_1} \times \R^{n_2})$ if
\begin{equation}\label{eq:BiParAp}
[W]_{\calA_p(\R^{n_1} \times \R^{n_2})}
:= \sup_{R\in \calR} \Big( \fint_R \Big( \fint_R \OpN{W(x)W(y)^{-1}} ^{p'} \ud y \Big)^{p/p'} \ud x \Big)^{1/p} < \infty.
\end{equation}
We refer to \cite{DKPS} for some basic
properties of bi-parameter matrix $\calA_p$ weights (in the case $p \in (1, \infty)$).

The following lemma is proved in \cite[Lemma 3.6]{DKPS}.

\begin{lem}\label{lem:UniformAp}
Let $p \in (1, \infty)$ and let $W \in \calA_p(\R^{n_1} \times \R^{n_2})$ be a bi-parameter matrix weight.
Then, for almost every $x_1 \in \R^{n_1}$, the function $W(x_1, \cdot)$ is a one-parameter matrix weight with
$$
[W(x_1, \cdot)]_{\calA_p(\R^{n_2})} \lesssim_{d,n,p} [W]_{\calA_p(\R^{n_1} \times \R^{n_2})}.
$$
The symmetric statement holds for $W(\cdot, x_2)$ for almost every $x_2 \in \R^{n_2}$.
\end{lem}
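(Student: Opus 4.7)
The plan is to deduce the one-parameter $\calA_p$ condition for $W(x_1,\cdot)$ from the bi-parameter condition by applying the latter on rectangles $R = Q_1 \times Q_2$ with $Q_1$ shrinking to $x_1$, and then passing to the limit via the Lebesgue differentiation theorem. To avoid measurability issues arising from the uncountable supremum over cubes, I would fix once and for all a countable dense family $\calD_2$ of cubes in $\R^{n_2}$, say with rational vertices. Since the one-parameter matrix $\calA_p$ constant equals, up to a harmless dimensional factor, the supremum of the relevant average over $\calD_2$, it is enough to show, for each $Q_2 \in \calD_2$, that
$$\fint_{Q_2} \Big( \fint_{Q_2} \OpN{W(x_1,x_2)W(x_1,y_2)^{-1}}^{p'} \ud y_2 \Big)^{p/p'} \ud x_2 \le C_{d,n,p}[W]_{\calA_p(\R^{n_1}\times\R^{n_2})}^p$$
for $x_1$ in a full-measure subset of $\R^{n_1}$ depending on $Q_2$; the conclusion then follows by intersecting countably many such sets.

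Fix $Q_2 \in \calD_2$ and choose a sequence of cubes $Q_1^k \ni x_1$ in $\R^{n_1}$ with $\diam Q_1^k \to 0$. The bi-parameter $\calA_p$ condition applied to $R_k = Q_1^k \times Q_2$ reads
$$\fint_{Q_1^k} \fint_{Q_2} \Big( \fint_{Q_1^k} \fint_{Q_2} \OpN{W(x_1',x_2)W(y_1,y_2)^{-1}}^{p'} \ud y_1 \ud y_2 \Big)^{p/p'} \ud x_1' \ud x_2 \le [W]_{\calA_p(\R^{n_1}\times\R^{n_2})}^p.$$
The aim is to bound the left-hand side from below, in the limit $k \to \infty$, by a constant multiple of the desired one-parameter quantity displayed above. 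Two applications of Lebesgue differentiation are needed: one for the outer average $\fint_{Q_1^k} \ud x_1'$, replacing $W(x_1',x_2)$ by $W(x_1,x_2)$ for a.e.\ $x_1$; and one for the inner average $\fint_{Q_1^k} \ud y_1$, replacing $W(y_1,y_2)^{-1}$ by $W(x_1,y_2)^{-1}$. Both rest on the local integrability of $x_1'\mapsto W(x_1',\cdot)$ and $y_1\mapsto W(y_1,\cdot)^{-1}$ (the latter available a.e.\ since $W$ is a matrix weight, and locally in an $L^{p'}$-sense because of the $\calA_p$ bound itself), combined with a Fubini argument that selects a full-measure set of $x_1$ on which the corresponding pointwise Lebesgue convergence holds for a.e.\ auxiliary variable $(x_2,y_2)$.

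The main obstacle is commuting these two differentiations with the outer exponent $(\cdot)^{p/p'}$ and the operator-norm supremum. A natural way to handle this is via Jensen's inequality: when $p \le 2$ the exponent $p/p' = p-1$ is $\le 1$ and concave, so it can be pulled inside the inner $\fint_{Q_1^k} \ud y_1$ (producing a lower bound), reducing the inner structure to a scalar local average to which Lebesgue differentiation applies directly; when $p > 2$, a symmetric argument applied to the dual form of the $\calA_p$ condition, or an approximation by $L^{p/p'}$-continuous integrands, plays the analogous role. Fatou's lemma then legitimates passing to the limit $k\to\infty$ in the outer double average, producing the displayed one-parameter estimate with the constant $C_{d,n,p}$ as claimed; the symmetric statement for $W(\cdot,x_2)$ follows by the obvious role reversal of the two parameters.
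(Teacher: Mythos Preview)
The paper does not give its own proof of this lemma; it simply cites \cite[Lemma 3.7]{DKPS}. So there is no argument in the paper to compare yours against, and the task is really to check whether your outline stands on its own.

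It does not, and the gap is exactly the step you flag as ``the main obstacle''. When you apply the bi-parameter condition on $R_k=Q_1^k\times Q_2$ and shrink $Q_1^k$ to a point $x_1$, the \emph{same} cube $Q_1^k$ appears in the outer average (variable $x_1'$) and in the inner average (variable $y_1$), while the integrand $\OpN{W(x_1',x_2)W(y_1,y_2)^{-1}}$ genuinely couples $x_1'$ and $y_1$ through a matrix product that does not factor. Your Jensen move (pulling the exponent $p-1$ inside the $y_1$-average when $p\le 2$) does not decouple these variables: after that step you are left with a double average $\fint_{Q_1^k}\fint_{Q_1^k}H(x_1',y_1)\,dy_1\,dx_1'$ of a nonnegative function $H$, and you need this to dominate $H(x_1,x_1)$ in the limit. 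That is a Lebesgue differentiation statement \emph{along the diagonal} of $\R^{n_1}\times\R^{n_1}$, and it is simply false for general locally integrable $H$; the set of Lebesgue points in the product has full measure in $\R^{2n_1}$ but says nothing about the measure-zero diagonal. The Fubini selection you describe does not rescue this, because Lebesgue differentiation gives convergence of \emph{averages}, not pointwise substitution $x_1'\to x_1$, $y_1\to x_1$ inside the integrand. The dual manoeuvre you suggest for $p>2$ inherits the same problem.

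What makes the argument go through (and what \cite{DKPS} uses) is the reducing-matrix characterisation of matrix $\calA_p$: up to constants depending only on $d,p$ one has $[W]_{\calA_p}\sim\sup_Q\OpN{\calA_Q\calA'_Q}$, where for each vector $e$ the quantities $|\calA_Q e|^p\sim\fint_Q|W(x)e|^p\,dx$ and $|\calA'_Q e|^{p'}\sim\fint_Q|W(y)^{-1}e|^{p'}\,dy$ each involve only \emph{one} integration variable. For $R=Q_1\times Q_2$ this factorisation lets you apply Lebesgue differentiation \emph{separately} to $x_1'\mapsto\fint_{Q_2}|W(x_1',x_2)e|^p\,dx_2$ and to $y_1\mapsto\fint_{Q_2}|W(y_1,y_2)^{-1}e|^{p'}\,dy_2$ (for $e$ ranging over a countable dense set), which is a legitimate one-variable statement and produces the one-parameter reducing matrices of $W(x_1,\cdot)$ in the limit. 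The diagonal issue never arises.
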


\subsection{Bi-parameter maximal functions}
Bi-parameter versions of the maximal operators are also obtained by replacing cubes with rectangles. 
The bi-parameter Christ-Goldberg maximal function 
or the strong Christ-Goldberg maximal function
is defined by 
\begin{equation}\label{eq:BiParCGMax}
M_{W}^s f(x) := \sup_{R \owns x} 
\fint_R \abs{W(x)W(y)^{-1} f(y)} \ud y,
\end{equation}
where $R \in \calR$,  $f \colon \R^{n_1} \times \R^{n_2} \to \R^d$ and 
$W \colon \R^{n_1} \times \R^{n_2} \to \calM_d$ is a matrix weight.
The strong convex set-valued maximal operator is defined by
\begin{equation}\label{eq:BiParCSMO}
M_\calK^s F(x)
:= \overline{\conv}\Big(\bigcup_{R \in \calR} \langle F \rangle_R 1_R(x) \Big)
=\overline{\conv}(\{ \langle f \rangle_R 1_R(x) \colon R \in \calR, f \in S^0(F)\} ),
\end{equation}
where $F \colon \R^{n_1} \times \R^{n_2} \to \calK_{bcs}(\R^d)$ is locally integrably bounded.

In the bi-parameter context we will also use one-parameter convex set-valued maximal operators acting only in one of the parameters.
Let $F \colon \R^n \to \calK_{bcs}(\R^d)$ be locally integrably bounded. Define
$$
M_\calK^1F(x_1,x_2):= M_\calK(F(\cdot, x_2))(x_1).
$$
Similarly, we define the operator $M_\calK^2$ which acts on the second parameter.

\begin{rem}
Let $p \in (1, \infty)$ and $W \in \calA_p$ be a one-parameter matrix weight. There is a
variant of the Christ-Goldberg maximal function defined by
\begin{equation}\label{eq19}
M_W'f(x) = \sup_{Q \owns x}
\fint_Q | \mathcal{W}_{Q,p} W(y)^{-1} f(y)| \ud y.
\end{equation}
The auxiliary operator $M_W'$ was introduced in \cite{CG,Goldberg} and it was used to prove the matrix weighted estimates 
\eqref{eq16} of $M_W$. 
Here $\mathcal{W}_{Q,p}$ is a
so-called \emph{reducing operator} associated to $W$ on $Q$, see \cite[Section 6]{BC} or
\cite[Section 1]{Goldberg} (recall that \cite{BC} uses the convention \eqref{eq18} and \cite{Goldberg} the convention
\eqref{eq17}). The operator $\mathcal{W}_{Q,p}$ is a symmetric and positive definite $d\times d$ matrix such that
$$
\Big(\fint_Q |W(x) v|^p \ud x \Big)^{1/p}
\le |\mathcal{W}_{Q,p} v |
\le \sqrt{d} \Big(\fint_Q |W(x) v|^p \ud x \Big)^{1/p}
$$
for every $v \in \R^d$.

The bi-parameter analogue of $M_W'$, obtained by replacing cubes by rectangles in \eqref{eq19}, 
was shown to satisfy bi-parameter matrix weighted estimates in 
\cite[Proposition 4.1]{DKPS}. 
\end{rem}

\section{Main Theorems}\label{sec:MainThms}

As discussed in the introduction, the usual scalar strong maximal function $M^s$ 
satisfies the estimate $M^s f(x) \le M^1 M^2 f(x)$, 
where $f \colon \R^{n_1} \times \R^{n_2} \to \R$ and $M^i$ is the one-parameter maximal
function acting in the parameter $i$.
Since the convex set-valued maximal operators are suitable for iteration, we can prove
the following analogue of this.

\begin{lem}\label{lem:BiParDomination}
Let $F \colon \R^n \to \calK_{bcs}(\R^d)$ be locally integrably bounded
and assume also that $M_\calK^2 F$ is locally integrably bounded. Then
$$
M_\calK^s F \subset M_\calK^1M_\calK^2F.
$$

\begin{proof}
Fix some $x \in \R^{n_1} \times \R^{n_2}$ and some bi-parameter rectangle $R = Q_1 \times Q_2 \owns x$.
Let $f \in S^0(F)$. Then $f(y_1,y_2) \in F(y_1,y_2)$, and thus
$$
\fint_{Q_2} f(y_1,y_2) \ud y_2 
\in \fint_{Q_2} F(y_1,y_2) \ud y_2
\subset M_\calK^2 F(y_1,x_2).
$$
Integrating this over $Q_1$ gives
$$
\fint_R f \ud y
= \fint_{Q_1} \fint_{Q_2} f(y_1,y_2) \ud y_2 \ud y_1
\in  \fint_{Q_1} M_\calK^2 F(y_1,x_2) \ud y_1
\subset M_\calK^1 M_\calK^2 F(x).
$$
Since this holds for any selection function $f$ and any rectangle $R = Q_1 \times Q_2 \owns x $,
and since by definition $M_\calK^1M_\calK^2F(x)$ is closed and convex, 
it follows that 
$$
M_\calK^s F(x)
=\overline{\conv}(\{ \langle f \rangle_R 1_R(x) \colon R \in \calR, f \in S^0(F)\} ) 
\subset M_\calK^1M_\calK^2F(x).
$$
\end{proof}

\end{lem}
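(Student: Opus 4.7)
The plan is to reduce everything to an elementwise check on the generators of the closed convex hull appearing in the definition of $M_\calK^s F(x)$. Since $M_\calK^1 M_\calK^2 F(x)$ is closed and convex by construction, it suffices to verify that for every rectangle $R = R_1 \times R_2 \ni x$ and every selection $f \in S^0(F)$, the vector $\langle f \rangle_R$ lies in $M_\calK^1 M_\calK^2 F(x)$; the desired inclusion then follows by taking the closed convex hull on the left.

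To produce this single-generator inclusion I would apply Fubini to split $\langle f \rangle_R = \fint_{R_1} \fint_{R_2} f(y_1,y_2) \, dy_2 \, dy_1$ and argue in two stages. First, fixing $y_1$, the slice $f(y_1,\cdot)$ is a selection of $F(y_1,\cdot)$, so its $R_2$-average lies in $\langle F(y_1,\cdot) \rangle_{R_2}$; since $x_2 \in R_2$, this average belongs to $M_\calK^2 F(y_1,x_2)$. Second, the resulting map $y_1 \mapsto \fint_{R_2} f(y_1,y_2) \, dy_2$ is a measurable selection of the set-valued function $y_1 \mapsto M_\calK^2 F(y_1,x_2)$, so averaging it over $R_1$ (with $x_1 \in R_1$) places the result in $\langle M_\calK^2 F(\cdot,x_2) \rangle_{R_1} \subset M_\calK^1 M_\calK^2 F(x)$.

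The main obstacle is technical rather than conceptual: one must ensure that the set-valued averages used are actually meaningful, and in particular that the Aumann-type averages in the second stage can indeed be formed. The assumption that $M_\calK^2 F$ is locally integrably bounded is precisely what makes every $S^0$-selection of $M_\calK^2 F(\cdot,x_2)$ locally integrable, allowing $\langle M_\calK^2 F(\cdot,x_2)\rangle_{R_1}$ to be realized as the set of $R_1$-averages of such selections, as in the definition given in Section \ref{sec:CSMO}. Measurability of the slice average of $f$ produced in the first stage is a direct consequence of Fubini. Once these minor bookkeeping points are handled the argument is essentially formal.
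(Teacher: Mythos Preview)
Your proposal is correct and follows essentially the same approach as the paper: fix $x$, show each generator $\langle f\rangle_R$ lies in $M_\calK^1 M_\calK^2 F(x)$ by splitting the average via Fubini and absorbing the inner average into $M_\calK^2$ and the outer into $M_\calK^1$, then pass to the closed convex hull using that $M_\calK^1 M_\calK^2 F(x)$ is closed and convex. Your version is in fact slightly more explicit about the measurability of the intermediate selection and the role of the local integrable boundedness hypothesis, but the argument is the same.
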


\begin{thm}\label{thm:StrongConvex}
Let $p \in (1, \infty]$ and $W \in \calA_p(\R^{n_1} \times \R^{n_2})$. Then
$$
\| M_\calK^s F \|_{L^p_{\calK}(W)}
\le C(n,d,p) [W]_{\calA_p(\R^{n_1} \times \R^{n_2})}^{2p'}  \| F \|_{L^p_{\calK}(W)}.
$$
If $p= \infty$, the estimate holds with $[W]_{\calA_\infty(\R^{n_1} \times \R^{n_2})}$ 
instead of $[W]_{\calA_\infty(\R^{n_1} \times \R^{n_2})}^{2}$.

\end{thm}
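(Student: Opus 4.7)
The plan is to combine the set-valued domination from Lemma~\ref{lem:BiParDomination} with two applications of the one-parameter bound \eqref{eq:ConvexMBounded}, mirroring the scalar iteration $M^s f \le M^1 M^2 f$. The key inputs are Lemma~\ref{lem:UniformAp}, which controls the one-parameter $\calA_p$ constants of the slices $W(\cdot,x_2)$ and $W(x_1,\cdot)$ by the bi-parameter constant, together with the fact that the set-valued maximal operator genuinely accommodates iteration.

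For $p \in (1,\infty)$, after verifying the mild integrability hypothesis of Lemma~\ref{lem:BiParDomination} (which follows from applying \eqref{eq:ConvexMBounded} slice-wise in the second parameter, using that $W(x_1,\cdot) \in \calA_p(\R^{n_2})$ with controlled constant by Lemma~\ref{lem:UniformAp}), I obtain the pointwise containment $M_\calK^s F(x) \subset M_\calK^1 M_\calK^2 F(x)$, and hence, by monotonicity of $A \mapsto |W(x) A|$, the pointwise estimate $|W M_\calK^s F| \le |W M_\calK^1 M_\calK^2 F|$. Writing the $L^p_{\calK}(W)$-norm as an iterated integral and applying \eqref{eq:ConvexMBounded} first in $x_1$ for fixed $x_2$, and then in $x_2$ for fixed $x_1$, produces a factor $C[W]_{\calA_p(\R^{n_1}\times\R^{n_2})}^{p'}$ at each step (again via Lemma~\ref{lem:UniformAp}), for a total of $[W]_{\calA_p(\R^{n_1}\times\R^{n_2})}^{2p'}$.

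The case $p = \infty$ demands separate treatment, since blind iteration would yield the suboptimal exponent $2$ and Lemma~\ref{lem:UniformAp} as stated does not cover $p = \infty$. I therefore argue directly from the bi-parameter $\calA_\infty$ definition: for any $f \in S^0(F)$, any rectangle $R \in \calR$, and almost every $x \in R$,
$$
|W(x)\langle f\rangle_R|
\le \fint_R \OpN{W(x)W(y)^{-1}}\,|W(y)f(y)|\,\ud y
\le \|F\|_{L^\infty_{\calK}(W)} \fint_R \OpN{W(x)W(y)^{-1}}\,\ud y.
$$
Since $v \mapsto |W(x) v|$ is a convex function, its supremum over the closed convex hull $M_\calK^s F(x)$ equals its supremum over the generating family, so taking the supremum over $R \owns x$ and $f \in S^0(F)$ and invoking the bi-parameter $\calA_\infty$ definition yields $|W(x) M_\calK^s F(x)| \le [W]_{\calA_\infty(\R^{n_1}\times\R^{n_2})} \|F\|_{L^\infty_{\calK}(W)}$ almost everywhere. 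The main obstacle is precisely this $p = \infty$ case, where preserving the sharp linear dependence forces this direct approach rather than iteration of \eqref{eq:ConvexMBounded}.
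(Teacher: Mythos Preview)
Your approach matches the paper's essentially step for step: for $p\in(1,\infty)$ the paper also verifies local integrable boundedness, invokes Lemma~\ref{lem:BiParDomination}, and then applies \eqref{eq:ConvexMBounded} twice via Lemma~\ref{lem:UniformAp}; for $p=\infty$ the paper likewise abandons iteration and argues directly from the definition of $\calA_\infty(\R^{n_1}\times\R^{n_2})$.

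There is one genuine technical gap in your $p=\infty$ argument. You write that for any $R\in\calR$ the bound $|W(x)\langle f\rangle_R|\le [W]_{\calA_\infty}\|F\|_{L^\infty_\calK(W)}$ holds for \emph{almost every} $x\in R$, and then take the supremum over all $R\owns x$ to conclude the same bound on $|W(x)M_\calK^s F(x)|$ almost everywhere. But the exceptional null set depends on $R$, and $\calR$ is uncountable, so the union of these null sets need not be null. The paper handles this by first observing (as in \cite[Proposition~5.7]{BC}) that in the definition of $M_\calK^s$ one may restrict to the countable family $\calR_\Q$ of rectangles with rational vertices; then the union $N=\bigcup_{R\in\calR_\Q} N_R$ of the exceptional sets is null, and for every $x\notin N$ the pointwise bound holds simultaneously for all $R\in\calR_\Q$ containing $x$. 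With this fix your argument goes through.
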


\begin{proof}
Suppose first that $p \in (1, \infty)$ and $F \in L^p_\calK(W)$. 
We begin by showing that $F$ and $M^2_\calK F$ are locally integrably bounded.

Let $Q \subset \R^n$ be a cube. Then
$$
\int_Q | F(y) | \ud y
\le \Big( \int_Q |W(y) F(y) |^p \ud y \Big)^{1/p}
\Big( \int_Q \OpN{W(y)^{-1}}^{p'} \ud y \Big)^{1/p'}.
$$
The first factor is finite and the second is less than
$$
\OpN{W(x)^{-1}} \Big( \int_Q \OpN{W(x)W(y)^{-1}}^{p'} \ud y \Big)^{1/p'}
$$
for almost every $x \in Q$. 
The matrix $\calA_p(\R^{n_1} \times \R^{n_2})$ condition implies that
the last integral is finite for almost every $x \in Q$. Thus $F$ is locally integrably bounded.
The proof below implies that $M^2_\calK F \in L^p_\calK(W)$,
and therefore we also get that $M^2_\calK F$ is locally integrably bounded.

Lemma \ref{lem:BiParDomination} implies that
$$
\int_{\R^n} | W(x) M_\calK F(x)|^p \ud x 
\le \int_{\R^n} | W(x) M_\calK^1M_\calK^2F(x)|^p \ud x. 
$$
Lemma \ref{lem:UniformAp}  says that 
$[W(\cdot, x_2) ]_{\calA_p(\R^{n_1})} \lesssim [ W ]_{\calA_p(\R^{n_1} \times \R^{n_2})}$   for almost every $x_2$.
Therefore, the matrix weighted estimate of $M_\calK$ \eqref{eq:ConvexMBounded}
implies that
\begin{equation*}
\begin{split}
\int_{\R^{n_2}} \int_{\R^{n_1}} &| W(x_1,x_2) M_\calK^1M_\calK^2F(x_1,x_2)|^p \ud x_1 \ud x_2 \\
&\lesssim [W]_{\calA_p(\R^{n_1} \times \R^{n_2})}^{pp'} \int_{\R^{n_2}} \int_{\R^{n_1}} | W(x_1,x_2) M_\calK^2F(x_1,x_2)|^p \ud x_1 \ud x_2.
\end{split}
\end{equation*}
We can change the order of integration and repeat the estimate, which concludes the proof in the case $p \in (1, \infty)$.

Suppose then that $p=\infty$ and $F \in L^\infty_\calK(W)$.
This case can be proved directly from the definition of $\calA_\infty(\R^{n_1} \times \R^{n_2})$,
similarly as the $L^\infty$-estimate of $M_W$ proved in  \cite[Proposition 6.8]{BC}.

Arguing similarly as in the case $p < \infty$ above implies that $F$ is locally integrably bounded.
Let $\calR_\Q$ denote the set of those bi-parameter rectangles
in $\R^{n_1} \times \R^{n_2}$ whose vertices have rational coordinates. Then,
$$
M_\calK^s F(x) 
= \overline{\conv}(\{ \langle f \rangle_R 1_R(x) \colon R \in \calR_\Q, f \in S^0(F)\} ),
$$
that is, in the definition of $M_\calK^s$ we can use the countable collection $\calR_\Q$, see \cite[Proposition 5.7]{BC}.
For every $R \in \calR_\Q$ there exists a set $N_R \subset R$ of measure zero so that
$$
\fint_R \OpN{W(x) W(y)^{-1}}\ud y \le [W]_{\calA_\infty(\R^{n_1} \times \R^{n_2})}, \quad \text{for every } x \in R \setminus N_R.
$$
The set $N:= \bigcup_{R \in \calR_\Q} N_R$ has measure zero and
$$
\fint_R \OpN{W(x) W(y)^{-1} }\ud y \le [W]_{\calA_\infty(\R^{n_1} \times \R^{n_2})}, 
\quad \text{for every } R \in \calR_\Q \text{ and } x \in R \setminus N.
$$

Suppose now that $x \in \R^n \setminus N$. Let $R \in \calR_\Q$ be such that $x \in R$ and let $f \in S^0(F)$.
Then
\begin{equation*}
\begin{split}
|W(x) \langle f \rangle_R|
= \Big| \fint_R W(x) W(y)^{-1}W(y) f(y) \ud y \Big| 
& \le  \fint_R \OpN{W(x) W(y)^{-1}} \ud y \|  W f  \| _{L^\infty (\R^n;\R^d)} \\
& \le [W]_{\calA_\infty(\R^{n_1} \times \R^{n_2})} \| F \|_{L^\infty_\calK(W)}.
\end{split}
\end{equation*}
Therefore, 
\begin{equation*}
\begin{split}
|W(x)M_\calK^s F(x)|
&=\big|W(x)\overline{\conv} (\{ \langle f \rangle_R 1_R(x) \colon R \in \calR_\Q, f \in S^0(F)\} )\big| \\
&=\big|\overline{\conv} (\{ W(x) \langle f \rangle_R 1_R(x) \colon R \in \calR_\Q, f \in S^0(F)\} )\big| \\
&= \big|\{ W(x) \langle f \rangle_R 1_R(x) \colon R \in \calR_\Q, f \in S^0(F)\} \big|
\le [W]_{\calA_\infty(\R^{n_1} \times \R^{n_2})} \| F \|_{L^\infty_\calK(W)}
\end{split}
\end{equation*} 
 for every $x \in \R^n \setminus N$, which
proves the claim.

\end{proof}

If $E \subset \R^n$ is a set with positive and finite measure and $f \colon E \to \R^d$ is integrable over $E$,
\emph{the convex body average of $f$ over $E$} is defined by
$$
\langle \! \langle f \rangle \! \rangle_E
:= \{ \langle \varphi f \rangle_E \colon \varphi \colon E \to [-1,1]\},
$$
see Nazarov, Petermichl, Treil, Volberg \cite{NPTV}. We record the following simple lemma
(see \cite[Section 2]{NPST}).

\begin{lem}\label{lem2}
Let $E$ be a set with positive and finite measure. Let $f \colon E \to \R^d$ be integrable. Then
$$
d^{-1} \langle | f | \rangle_E  
\le | \langle \! \langle f \rangle \! \rangle _E |
\le \langle | f | \rangle_E.
$$
\end{lem}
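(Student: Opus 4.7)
The plan is to handle the two inequalities separately; both follow directly from the definitions, with no significant obstacle beyond choosing the right test functions.

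For the upper bound, I would observe that for any measurable $\varphi \colon E \to [-1,1]$ the triangle inequality for Bochner integrals gives
$$
|\langle \varphi f \rangle_E| = \Big| \fint_E \varphi(y) f(y) \ud y \Big| \le \fint_E |\varphi(y)| \, |f(y)| \ud y \le \fint_E |f(y)| \ud y = \langle | f | \rangle_E.
$$
Taking the supremum over $\varphi$ yields $|\langle\!\langle f \rangle\!\rangle_E| \le \langle |f| \rangle_E$.

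For the lower bound, I would write $f = (f_1, \dots, f_d)$ in the standard basis of $\R^d$ and test against the sign functions of the coordinates. More precisely, for each $i=1,\dots,d$, set $\varphi_i(y) := \sign(f_i(y))$, which is a measurable $[-1,1]$-valued function. Then the $i$-th component of $\langle \varphi_i f \rangle_E$ equals $\fint_E |f_i|$, so in particular
$$
|\langle \varphi_i f \rangle_E| \ge \fint_E |f_i(y)| \ud y.
$$
Combining this with the pointwise inequality $|f(y)| \le \sum_{i=1}^d |f_i(y)|$ gives
$$
\langle |f| \rangle_E \le \sum_{i=1}^d \fint_E |f_i(y)| \ud y \le \sum_{i=1}^d |\langle \varphi_i f \rangle_E| \le d \, |\langle\!\langle f \rangle\!\rangle_E|,
$$
where the last step uses that each $\langle \varphi_i f \rangle_E$ is an element of $\langle\!\langle f \rangle\!\rangle_E$. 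Dividing by $d$ finishes the proof. There is no real difficulty here; the only minor point to note is that the factor $d$ arises from using the $\ell^1$--$\ell^2$ comparison on $\R^d$, and the sign-function test is precisely what extracts $|f_i|$ from each coordinate.
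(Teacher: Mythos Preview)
Your proof is correct and matches the paper's approach essentially verbatim: the upper bound is the triangle inequality, and the lower bound uses the test functions $\varphi_i=\sign(f_i)$ together with $|f|\le\sum_i|f_i|$, exactly as in the paper.
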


\begin{proof}
We include the proof for reader's convenience. It is clear that $| \langle \! \langle f \rangle \! \rangle _E |
\le \langle | f | \rangle_E$.

Let $f=(f_1, \dots, f_d)$, where the functions $f_i$ are the coordinate functions of $f$. Then
$$
\fint_E | f  | \ud x \le \sum_i \fint_E | f_i| \ud x
=\sum_i \fint_E \sign(f_i) f_i \ud x
\le \sum_i \Big| \fint_E \sign(f_i)f \ud x \Big|
\le d | \langle \! \langle f \rangle \! \rangle _E |.
$$
\end{proof}

The convex set-valued maximal operators can be written in terms of
the Christ-Goldberg maximal operators and selection functions
as indicated in the next lemma.

\begin{lem}\label{lem1}
Let $W \colon \R^n \to \calM_d$ be a matrix weight. Suppose that 
$F \colon \R^n \to \calK_{bcs}(\R^d)$ is a measurable function such that $W^{-1} F$ is locally 
integrably bounded. Then
\begin{equation}\label{eq3}
|W(x) M_\calK^s(W^{-1}F)(x)|
\sim_d \sup_{ f \in S^0(F)} M_W^s f(x).
\end{equation}

In particular, if $F(x) = \conv ( \{-f(x),f(x)\})$ for some $f \colon \R^n \to \R^d$, then
\begin{equation}\label{eq7}
|W(x) M_\calK^s (W^{-1}F)(x)| 
\sim_d M_W^s f(x).
\end{equation}

These estimates also hold with $M_\calK$ and $M_W$ in place of $M_\calK^s$ and $M_W^s$, respectively.

\end{lem}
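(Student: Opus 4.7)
The plan is to unfold the definition of $M_\calK^s(W^{-1}F)$, parameterize its generators by selections $f \in S^0(F)$, and then use the convex body average $\langle\!\langle \cdot \rangle\!\rangle_R$ together with Lemma \ref{lem2} to bridge the norm of a mean (which appears on the left of \eqref{eq3}) with the mean of the norm (which defines $M_W^s$).

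First I would observe that $g \in S^0(W^{-1}F)$ if and only if $Wg \in S^0(F)$, so every generator $\langle g\rangle_R 1_R(x)$ of $M_\calK^s(W^{-1}F)(x)$ can be written as $\langle W^{-1}f\rangle_R 1_R(x)$ with $R \in \calR$ and $f \in S^0(F)$. Since $W(x)$ is linear and invertible it commutes with $\overline{\conv}$, and by convexity of the Euclidean norm $|\overline{\conv}(A)| = \sup_{a \in A}|a|$ for any $A \subset \R^d$. Combining these yields the identity
$$
|W(x) M_\calK^s(W^{-1}F)(x)|
= \sup_{R \owns x,\, f \in S^0(F)}\Big|\fint_R W(x)W(y)^{-1}f(y)\ud y\Big|.
$$
From here the estimate $\lesssim$ in \eqref{eq3} is immediate by moving the absolute value inside the average and bounding by $M_W^s f(x)$.

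The main step is the reverse inequality, which exploits that each $F(y)$ is convex and symmetric about the origin. Fix $f \in S^0(F)$ and $R \owns x$. For any measurable $\varphi \colon \R^n \to [-1,1]$ the function $\varphi f$ is again a selection of $F$ (extended outside $R$ by $f$ itself), hence
$$
\langle\!\langle W(x)W(\cdot)^{-1}f\rangle\!\rangle_R \subset \big\{W(x)\langle W^{-1}g\rangle_R : g \in S^0(F)\big\}.
$$
Applying Lemma \ref{lem2} to the $\R^d$-valued function $W(x)W(\cdot)^{-1}f$ on $R$ gives
$$
d^{-1}\fint_R |W(x)W(y)^{-1}f(y)|\ud y
\le \sup_{g \in S^0(F)} \Big|\fint_R W(x)W(y)^{-1}g(y)\ud y\Big|,
$$
and taking suprema over $R \owns x$ and $f \in S^0(F)$ yields $d^{-1}\sup_{f \in S^0(F)} M_W^s f(x) \le |W(x) M_\calK^s(W^{-1}F)(x)|$, completing \eqref{eq3}.

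For \eqref{eq7}, when $F(y) = \conv\{-f(y), f(y)\}$, every $g \in S^0(F)$ has the form $g = \varphi f$ for some measurable $\varphi \colon \R^n \to [-1,1]$, so $M_W^s g(x) \le M_W^s f(x)$ with equality at $\varphi \equiv 1$. Therefore $\sup_{g \in S^0(F)}M_W^s g(x) = M_W^s f(x)$ and \eqref{eq7} follows from \eqref{eq3}. The one-parameter statement is obtained by exactly the same argument with cubes in place of rectangles. The only real obstacle is the reverse direction of \eqref{eq3}: one must recover the mean of the norm from the norm of a mean, up to the dimensional factor $d$, which is exactly what the convex body average and Lemma \ref{lem2} were designed to provide.
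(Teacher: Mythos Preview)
Your proof is correct and follows essentially the same route as the paper: you derive the identity $|W(x)M_\calK^s(W^{-1}F)(x)|=\sup_{R\owns x,\,f\in S^0(F)}|W(x)\langle W^{-1}f\rangle_R|$, obtain the upper bound trivially, and for the lower bound use that $\varphi f\in S^0(F)$ together with Lemma~\ref{lem2}, exactly as the paper does. The only cosmetic differences are that you spell out why $W(x)$ commutes with $\overline{\conv}$ and why $|\overline{\conv}(A)|=\sup_{a\in A}|a|$, and you note the harmless extension of $\varphi f$ outside $R$.
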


\begin{proof}
Notice that
$$
S^0(W^{-1}F)
=\{ W^{-1} f \colon f \in S^0(F)\}.
$$
There holds that
\begin{equation}\label{eq2}
\begin{split}
|W(x) M_\calK^s(W^{-1}F)(x)|
&=\Big|W(x) \overline{\conv}\big(\{ \langle W^{-1}f \rangle_R 1_R(x) \colon R \in \calR, f \in S^0(F)\} \big) \Big| \\
&=\Big| \overline{\conv}\big(\{ W(x) \langle W^{-1}f \rangle_R 1_R(x) \colon R \in \calR, f \in S^0(F)\} \big) \Big| \\
&= \sup_{R,f} |W(x) \langle W^{-1}f \rangle_R| 1_R(x).
\end{split}
\end{equation}
It is clear that this last quantity is less than $\sup_{f \in S^0(F)} M_W^s f(x)$.

On the other hand, let $f \in S^0(F)$ and $x \in R \in \calR$. Lemma \ref{lem2} gives that 
\begin{equation*}
\begin{split}
\fint_R |W(x) W(y)^{-1} f(y) | \ud y 
&\le d \sup_{\varphi \colon R \to [-1,1]} \Big| \fint_R W(x) W(y)^{-1} \varphi(y) f(y)  \ud y \Big| \\
&\le d |W(x) M_\calK^s (W^{-1}F)(x)|;
\end{split}
\end{equation*}
the last step follows from \eqref{eq2} because $\varphi(y) f(y) \in F(y)$ since the values of $F$ are convex and symmetric.
Taking supremum over $R \owns x$ and $f \in S^0(F)$ gives that
$$
\sup_{f \in S^0(F)}M^s_W f(x) \le d |W(x) M_\calK^s (W^{-1}F)(x)|.
$$

To get the last claim, let $f \colon \R^n \to \R^d$ and define $F(x) = \conv(\{-f(x),f(x)\})$.
Then every selection function of $F$ is of the form $\varphi f$, where $\varphi \colon \R^n \to [-1,1]$.
This gives that 
$$
|W(x) M_\calK^s(W^{-1}F)(x)|
\sim_d \sup_{ \varphi \colon \R^n \to [-1,1]} M_W^s(\varphi f)(x)
=M_W^s f(x).
$$

Finally we notice that 
it makes no difference to the proof whether we have the one-parameter or the bi-parameter maximal functions.
\end{proof}

\begin{rem}
Lemma \ref{lem1} implies the unweighted boundedness of $M_\calK^s$ immediately.
Indeed, let $p \in (1, \infty]$ and $F \in L^p_\calK(\R^n)$. Let $M_I^s$
be the unweighted strong maximal operator ($M_W^s$ in the case that
$W(x)=I$, the identity matrix, for every $x$.) Then \eqref{eq2} gives that
$$
|M_\calK ^s F(x)| 
\le \sup_{ f \in S^0(F)} M_I^s f(x)
\le M^s (|F|)(x), 
$$
where in the last step we have the scalar strong maximal operator $M^s$ (see \eqref{M^s}) 
acting on the scalar-valued function $|F|$. Therefore,
$$
\| M^s_\calK F \|_{L^p_\calK(\R^n)}
\le \| M^s (|F|) \|_{L^p(\R^n)} 
\lesssim \||F| \|_{L^p(\R^n)}
=\| F \|_{L^p_\calK(\R^n)}.
$$

\end{rem}

The proof of the matrix weighted estimate of $M_\calK$ \eqref{eq:ConvexMBounded} in \cite[Theorem 6.9]{BC}
is based on the fact that 
Lemma \ref{lem1} holds in fact in the following stronger form.
Given $F \colon \R^n \to \calK_{bcs}(\R^d)$ and a matrix weight $W$ such that $W^{-1}F$ is locally integrably bounded,
there exist selection functions $f_1, \dots, f_d \in S^0(F)$ such that
\begin{equation}\label{eq4}
|W(x) M_\calK(W^{-1}F)(x)|
\lesssim_d \sum_{i=1}^d M_W f_i(x),
\end{equation}
see \cite[Proof of Theorem 6.9]{BC}. The analogous estimate holds with $M_\calK^s$ and $M_W^s$
in place of $M_\calK$ and $M_W$, respectively. Compare this with \eqref{eq3}. 
Equation \eqref{eq4} reduces the matrix weighted estimates of $M_\calK$ to the boundedness of $M_W$. 
In the proof of Theorem \ref{thm:StrongCG} we will use the connection between $M_\calK^s$ and 
$M_W^s$ in the other direction to deduce estimates for 
$M_W^s$ from the corresponding estimates of $M_\calK^s$.

\begin{proof}[Proof of Theorem \ref{thm:StrongCG}]
Let $f \in L^p(\R^n;\R^d)$. Define the function $F \in L^p_{\calK}(\R^n)$ by $F(x)=\conv(\{-f(x),f(x)\})$.
Then, by \eqref{eq7} there holds that $M^s_Wf \lesssim |W M_\calK^s(W^{-1}F)|$. Therefore,
\begin{equation}\label{eq8}
\begin{split}
\| M^s_W f \|_{L^p(\R^n)}
\lesssim \| W M_\calK^s (W^{-1}F) \|_{L_\calK^p(\R^n)}
&\lesssim [W]_{\calA_p(\R^{n_1} \times \R^{n_2})}^{2p'} \| F \|_{L^p_\calK(\R^n)} \\
&=[W]_{\calA_p(\R^{n_1} \times \R^{n_2})}^{2p'} \| f \|_{L^p(\R^n;\R^d)},
\end{split}
\end{equation}
where in the second step we used the matrix weighted estimate of $M_\calK^s$ (Theorem \ref{thm:StrongConvex}). 
If $p=\infty$ the argument gives the constant 
$[W]_{\calA_\infty(\R^{n_1} \times \R^{n_2})}$.
\end{proof}

\begin{rem}\label{rem:sharpness}
Let $p \in (1, \infty)$.
In Theorem \ref{thm:StrongCG} we got the constant $C(n,d,p)[W]_{\calA_p(\R^{n_1} \times \R^{n_2})}^{2p'}$
related to the boundedness of $M_W^s$. It is not known if the exponent $2p'$,
which we obtained by twice iterating the one-parameter maximal function estimate, 
is best possible. 
More precisely, this is not known in the scalar case $d=1$.

On the other hand, it is well known that in the scalar case $d=1$ the lower bound 
$\| M^s_W \|_{L^p \to L^p} \ge [W]_{\calA_p(\R^{n_1} \times \R^{n_2})}^{p'}$
can be easily proved by testing the maximal operator with functions of the form
$1_R W^{-p'/p}$. This lower bound holds also in the case $p=\infty$,
so the constant obtained in Theorem \ref{thm:StrongCG} in the case $p=\infty$ 
is sharp.

We remind the reader that we use the formulation \eqref{eq18} of weighted estimates and the related weight class 
$\calA_p$, and not the traditional way \eqref{eq17} and the related weight class $A_p$ 
(see the end of Section \ref{sec:MatrixWeights} for $A_p$). This is to be kept in mind when one compares
constants in different papers.
\end{rem}

Finally, corresponding to the estimate \eqref{eq9} of $M_W$, 
we record the following version of the matrix weighted estimates
of $M_\calK^s$ and $M_W^s$.

\begin{thm}\label{thm1}
Let $p \in (1, \infty)$ and $B > 1$.
There exists a
$
\delta 
= \delta(n,d,p,B) > 0
$ 
such that if $W \in \calA_p(\R^{n_1} \times \R^{n_2})$ with 
$[W]_{\calA_p(\R^{n_1} \times \R^{n_2})} \le B$,
the estimates
\begin{equation}\label{eq10}
\|M_W^s f \|_{L^q(\R^n)} \le C(n,d,p,q, B) \| f \|_{L^q(\R^n;\R^d)}
\end{equation}
and
\begin{equation}\label{eq11}
\|M_\calK^s F \|_{L^q_\calK(W)} \le C(n,d,p,q, B) \| F \|_{L^q_\calK(W)}
\end{equation}
hold for those $q \in (1, \infty)$ such that $|q-p| < \delta$.
\end{thm}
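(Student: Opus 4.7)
My plan is to prove the convex set-valued estimate \eqref{eq11} first and then deduce the vector estimate \eqref{eq10}, mirroring the way Theorem \ref{thm:StrongCG} was extracted from Theorem \ref{thm:StrongConvex}. The proof of \eqref{eq11} will follow the same iterative scheme as Theorem \ref{thm:StrongConvex}, the only new point being that the sharp single-exponent bound \eqref{eq:ConvexMBounded} must be replaced by an open-range one-parameter estimate for $M_\calK$ extracted from Goldberg's estimate \eqref{eq9}.

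The first step is to observe that \eqref{eq9} upgrades to a convex set-valued statement with no loss of range. Given a one-parameter matrix weight $w \in \calA_p$ and $G \in L^q_\calK(\R^m, w)$, set $F = w G$ so that $w^{-1} F = G$ and $|F| = |wG|$. The strengthened form of Lemma \ref{lem1} recorded in \eqref{eq4} produces selection functions $f_1, \dots, f_d \in S^0(F)$ with
\[
|w(x) M_\calK G(x)| = |w(x) M_\calK(w^{-1}F)(x)| \lesssim_d \sum_{i=1}^d M_w f_i(x),
\]
and since $|f_i| \le |F| = |wG|$, applying \eqref{eq9} for each $q$ with $|q-p| < \delta(n,d,p,[w]_{\calA_p})$ yields
\[
\|M_\calK G\|_{L^q_\calK(\R^m, w)} \le C(n,d,q,[w]_{\calA_p}) \|G\|_{L^q_\calK(\R^m, w)}.
\]
The local integrable boundedness of $F$ and $G$ required for the set-valued machinery is verified as in the first paragraph of the proof of Theorem \ref{thm:StrongConvex}.

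For the bi-parameter case, Lemma \ref{lem:UniformAp} shows that the sections $W(\cdot, x_2)$ and $W(x_1, \cdot)$ lie in the corresponding one-parameter classes $\calA_p$ with characteristics controlled by $[W]_{\calA_p(\R^{n_1} \times \R^{n_2})}$, and by the monotonicity of $\delta([w]_{\calA_p})$ emphasized in Section \ref{sec:CGMax} a single $\delta = \delta(n,d,p,[W]_{\calA_p(\R^{n_1} \times \R^{n_2})}) > 0$ works for all these sections simultaneously. For $q$ with $|q-p| < \delta$ and $F \in L^q_\calK(W)$, I would then invoke Lemma \ref{lem:BiParDomination} to obtain $M_\calK^s F \subset M_\calK^1 M_\calK^2 F$ and iterate the previous step, first in $x_1$ (applied to the slice $M_\calK^2 F(\cdot, x_2)$) and then in $x_2$ after switching the order of integration, exactly as in the proof of Theorem \ref{thm:StrongConvex}. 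This proves \eqref{eq11}. Finally, to pass from \eqref{eq11} to \eqref{eq10} I would repeat the proof of Theorem \ref{thm:StrongCG} verbatim: given $f \in L^q(\R^n;\R^d)$, set $F(x) = \conv(\{-f(x), f(x)\})$, use \eqref{eq7} to dominate $M_W^s f$ pointwise by $|W M_\calK^s(W^{-1} F)|$, and apply \eqref{eq11}.

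The only real point to watch is the uniformity of the range parameter: one must verify that a single $\delta$ works simultaneously for the uncountable family of sections appearing during the iteration. This is precisely what Lemma \ref{lem:UniformAp} together with the monotonicity remark after \eqref{eq9} provides, so no genuine new obstacle arises beyond careful bookkeeping of the $\calA_p$ characteristics.
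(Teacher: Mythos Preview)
Your proposal is correct and follows essentially the same route as the paper: upgrade Goldberg's open-range estimate \eqref{eq9} to a one-parameter $M_\calK$ bound via \eqref{eq4}, use Lemma~\ref{lem:UniformAp} and the monotonicity of $\delta$ to obtain a uniform range for all slices, and then iterate through Lemma~\ref{lem:BiParDomination} exactly as in Theorem~\ref{thm:StrongConvex}. In fact you are slightly more complete than the paper's own proof, which handles only \eqref{eq11} and leaves the passage to \eqref{eq10} implicit; your final paragraph supplying that step via \eqref{eq7} is the natural way to close the argument.
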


\begin{proof}
We begin by proving the one-parameter version of \eqref{eq11}.
Equation \eqref{eq9} says that there exists a 
$\delta = \delta (n,d,p, B) > 0$ such that
if $W \in \calA_p(\R^n)$ is a one-parameter matrix weight with $[W]_{\calA_p(\R^n)} \le B$,
then
$$
\| M_W f \|_{L^q(\R^n)}
\le C(n,d,q,B) \| f \|_{L^q(\R^n;\R^d)}
$$
holds for those $q \in (1, \infty)$ such that $|q-p| < \delta$.
For such a $q$ let $G \in L^q_\calK(\R^n)$. Equation \eqref{eq4}
implies that there exist selection functions $g_1, \dots, g_d \in S^0(G)$ so that
$$
|W(x) M_\calK G(x)|
= |W(x) M_\calK(W^{-1} W G)(x)|
\lesssim \sum_i M_W (Wg_i)(x).
$$
This leads to
$$
\| M_\calK G \|_{L^q_\calK(W)}
\lesssim \sum_i \| M_W (Wg_i) \|_{L^q(\R^n)}
\lesssim \sum_i \|  Wg_i \|_{L^q(\R^n;\R^d)}
\lesssim \| G \|_{L^q_\calK(W)}.
$$

Now we turn to \eqref{eq11}. Let $W \in \calA_p(\R^{n_1} \times \R^{n_2})$
be such that $[W]_{\calA_p(\R^{n_1} \times \R^{n_2})} \le B$.
By Lemma \ref{lem:UniformAp} 
$$
[W(x_1, \cdot)]_{\calA_p(\R^{n_2})} \le C(n,d,p,)[W]_{\calA_p(\R^{n_1} \times \R^{n_2})}
\le C(n,d,p,) B
$$
for almost every $x_1 \in \R^{n_1}$. Therefore, applying the one-parameter version of \eqref{eq11} 
(with $B' = C(n,d,p,)B$)
it follows that 
there exists a 
$
\delta = \delta(n,d,p,B) > 0
$
such that for a.e.\ $x_1$ the estimate 
$$
\| M_\calK G \|_{L^q_\calK(W(x_1, \cdot))}
\le C(n,d,p,q, B) 
\|  G \|_{L^q_\calK(W(x_1, \cdot))}
$$
holds for those $q \in (1, \infty)$ such that $|q-p| < \delta$.
The corresponding estimate holds with the roles of $x_1$ and $x_2$ reversed.

Now, using the fact that $M^s_\calK F \subset M^1_\calK M^2_\calK F$ (Lemma \ref{lem:BiParDomination}) 
we conclude that
\begin{equation*}
\begin{split}
\int_{\R^n} | W(x) M_\calK^s F (x) |^q \ud x
&\le \int_{\R^{n_2}} \int_{\R^{n_1}} | W(x_1,x_2) M^1_\calK M^2_\calK F (x_1,x_2) |^q \ud x_1 \ud x_2 \\
& \lesssim  \int_{\R^{n_1}} \int_{\R^{n_2}} | W(x_1,x_2)  M^2_\calK F (x_1,x_2) |^q \ud x_2 \ud x_1 \\
& \lesssim \int_{\R^n} | W(x) F (x) |^q \ud x
\end{split}
\end{equation*} 
holds for those $q \in (1, \infty)$ such that $|q-p| < \delta$.

The estimate \eqref{eq10} follows from \eqref{eq11}
via \eqref{eq7}, similarly as in \eqref{eq8}.
\end{proof}

\subsection{Extrapolation with bi-parameter matrix weights}\label{sec:bi-parExtra}
In \cite[Theorem 9.1]{BC} the Rubio de Francia extrapolation with matrix weights was proved.
One of the main ingredients was the convex set-valued maximal operator $M_\calK$ and its
matrix weighted estimates. 

Theorem \ref{thm:StrongConvex} gives the bi-parameter matrix weighted 
estimates of $M_\calK^s$.
Using these one
proves the extrapolation with bi-parameter matrix weights precisely following the proof  in \cite{BC}. 
To do this one replaces cubes by rectangles and the one-parameter maximal function $M_\calK$ by the 
bi-parameter maximal function $M_\calK^s$.
After this the proof goes through without any significant changes in the details. 
In general, the passage from cubes to rectangles, from one-parameter analysis
to bi-parameter analysis, makes a big difference. 
However, concerning the results needed for extrapolation, there
is no essential difference once one has the matrix weighted estimates of $M_\calK^s$
(see \cite{CMP_Book} for this fact in the scalar case). 

The operator $M_\calK^s$ has the same essential properties
as $M_\calK$: 
it is monotone, sublinear and bounded on the matrix weighted $L^p$ spaces.
The only difference between $M_\calK^s$ and $M_\calK$ is in
how the constants in the matrix weighted estimates depend
on the weight characteristics. Therefore, when one follows
the proof of \cite{BC} in the bi-parameter context, one ends up with 
different quantitative constants.

The statement of the extrapolation theorem involves  \emph{extrapolation pairs}, see \cite[Section 9]{BC}
or the book Cruz-Uribe, Martell, P\'erez \cite{CMP_Book}. 
Let $\calF$ denote a family of pairs $(f,g)$ of measurable functions $f,g \colon \R^n \to \R^d$.
If we write an inequality 
$$
\| f \|_{L^p(W)} \le C \| g \|_{L^p(W)}, \quad (f,g) \in \calF,
$$
we mean that this inequality holds for all pairs $(f,g) \in \calF$ such that 
$\| f \|_{L^p(W)} < \infty$. Here 
$\| f \|_{L^p(W)}:= \| | Wf| \|_{L^p(\R^n)}$.

If $q \in [1, \infty]$, then $q'$ denotes the dual exponent of $q$, that is, 
$1/q+1/q' =1$.

\begin{thm}\label{thm:BiParExtrap}
Let $p_0 \in [1, \infty]$. Suppose that there exists an increasing function 
$K_{p_0}$ such that
$$
\| f \|_{L^{p_0}(W)} \le K_{p_0}([W]_{\calA_{p_0}(\R^{n_1} \times \R^{n_2})})\| g \|_{L^{p_0}(W)},
 \quad (f,g) \in \calF,
$$
holds for every $W \in \calA_{p_0}(\R^{n_1} \times \R^{n_2})$.
Then, for every $p \in (1,\infty)$ the estimate
\begin{equation}\label{eq5}
\| f \|_{L^{p}(W)} \le K(K_{p_0},p,p_0,n,d,[W]_{\calA_{p}(\R^{n_1} \times \R^{n_2})})\| g \|_{L^{p}(W)},
 \quad (f,g) \in \calF,
\end{equation}
holds for every $W \in \calA_{p}(\R^{n_1} \times \R^{n_2})$, where
$K(K_{p_0},p,p_0,n,d,[W]_{\calA_{p}(\R^{n_1} \times \R^{n_2})})$ is increasing with
respect to $[W]_{\calA_p(\R^{n_1} \times \R^{n_2})}$.

\end{thm}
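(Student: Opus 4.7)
The plan is to follow the proof of the one-parameter matrix extrapolation theorem \cite[Theorem 9.1]{BC} essentially line by line, replacing cubes by bi-parameter rectangles, the class $\calA_p$ by $\calA_p(\R^{n_1}\times\R^{n_2})$, and the convex set-valued maximal operator $M_\calK$ by its strong analogue $M_\calK^s$. The extrapolation machinery in \cite{BC} uses one-parameter weights only through two facts: the weighted boundedness of $M_\calK$ on $L^p_\calK(W)$ for $W\in\calA_p$, namely \eqref{eq:ConvexMBounded}, and the corresponding bound on $L^\infty_\calK(W)$ for $W\in\calA_\infty$. Both are now available in the bi-parameter setting via Theorem \ref{thm:StrongConvex}, with explicit quantitative dependence on $[W]_{\calA_p(\R^{n_1}\times\R^{n_2})}$ that is monotone in this characteristic.

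The key mechanism is a Rubio de Francia iteration built from $M_\calK^s$. In the case $1<p<p_0<\infty$, given $W\in\calA_p(\R^{n_1}\times\R^{n_2})$, one forms for a suitable (set-valued) function $H$ the iterated operator
$$
\rbound H = \sum_{k=0}^\infty \frac{(M_\calK^s)^k H}{2^k \, C^k},
$$
where $C$ is the operator norm of $M_\calK^s$ on the weighted $L^q_\calK$ space naturally associated to $W$, $p$, and $p_0$, with the dual exponent chosen so that the relevant weight lies in $\calA_q(\R^{n_1}\times\R^{n_2})$ and Theorem \ref{thm:StrongConvex} applies. By construction $H \subset \rbound H$, one has $\|\rbound H\| \le 2\|H\|$ in that space, and $\rbound H$ satisfies an $\calA_1$-type condition with respect to bi-parameter rectangles with constant controlled by $C$. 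Combining this iteration with matrix-valued H\"older inequalities and the duality for $L^p_\calK(W)$ spaces recorded in \cite[Section 4]{BC} produces a new bi-parameter weight $\wt W\in\calA_{p_0}(\R^{n_1}\times\R^{n_2})$ with $[\wt W]_{\calA_{p_0}(\R^{n_1}\times\R^{n_2})}$ bounded in terms of $[W]_{\calA_p(\R^{n_1}\times\R^{n_2})}$. Applying the hypothesis at $p_0$ to $(f,g)$ with weight $\wt W$ and translating the resulting $L^{p_0}(\wt W)$ estimate back to $L^p(W)$ via H\"older yields \eqref{eq5}. The case $p>p_0$ is handled by the analogous dual iteration, and the endpoint $p_0=\infty$ uses the $\calA_\infty(\R^{n_1}\times\R^{n_2})$ estimate from Theorem \ref{thm:StrongConvex}.

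Monotonicity of $K$ in $[W]_{\calA_p(\R^{n_1}\times\R^{n_2})}$ is automatic because every constant produced along the way is increasing in this characteristic. The main obstacle is organisational rather than conceptual: one must verify that the set-valued analogues of H\"older and of the duality $L^p_\calK(W) \leftrightarrow L^{p'}_\calK(W^{-1})$ behave identically in the bi-parameter geometry, and that the $\calA_p(\R^{n_1}\times\R^{n_2})$ characteristics are correctly tracked through the Rubio de Francia iteration. Both points are insensitive to whether the averaging is done over cubes or rectangles, since they are pointwise statements on the $\calK_{bcs}(\R^d)$-valued side, and they are resolved in the bi-parameter setting by Theorem \ref{thm:StrongConvex} together with Lemma \ref{lem:UniformAp} in exactly the way \cite{BC} resolves them in one parameter.
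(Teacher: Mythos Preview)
Your proposal is correct and matches the paper's approach exactly: the paper does not give a detailed proof but simply states that one follows the one-parameter argument of \cite[Theorem~9.1]{BC} verbatim, replacing $M_\calK$ by $M_\calK^s$ and invoking Theorem~\ref{thm:StrongConvex} for the needed weighted bounds. Your sketch of the Rubio de Francia iteration and the tracking of constants is precisely what the paper has in mind, including the observation (made in the remark following the theorem) that the resulting quantitative constant is worse than in one parameter because of the exponent $2p'$ in Theorem~\ref{thm:StrongConvex}.
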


\begin{rem}
The one-parameter extrapolation theorem with matrix weights proved in \cite[Theorem 9.1]{BC}
is a \emph{sharp constant extrapolation theorem}. 
This means that  in \cite[Theorem 9.1]{BC}
the resulting constant $K(K_{p_0},p,p_0,n,d,[W]_{\calA_{p}(\R^{n})})$,
which corresponds to the constant in \eqref{eq5}, is given
by a certain formula and depends on $[W]_{\calA_p(\R^n)}$
in a certain sharp way. The bi-parameter theorem \ref{thm:BiParExtrap}
is proved by following the proof in \cite{BC}, but one does not get the same quantitative
constant. This is because the constant in the matrix weighted estimate of $M_\calK^s$
we get in Theorem \ref{thm:StrongConvex} involves $[W]^{2p'}_{\calA_p(\R^{n_1} \times \R^{n_2})}$,
while in the corresponding one-parameter estimate there is $[W]_{\calA_p(\R^n)}^{p'}$, see \eqref{eq:ConvexMBounded}.
One could track the constants in \cite{BC} to obtain some quantitative constant also in 
Theorem \ref{thm:BiParExtrap}.
\end{rem}

Bi-parameter Journ\'e operators, also known as bi-parameter Calder\'on-Zygmund operators,
are a certain type of bi-parameter singular integral operator. We refer for example to \cite{DKPS} and  
Martikainen \cite{Martikainen} for the exact definitions. It was shown in \cite{DKPS} that
if $T$ is a bi-parameter Journ\'e operator and $W \in \calA_2(\R^{n_1} \times \R^{n_2})$
is a bi-parameter matrix weight, then the estimate
\begin{equation}\label{eq12}
\| T f \|_{L^2(W)}
\lesssim \| f \|_{L^2(W)}
\end{equation}
holds. The authors in \cite{DKPS} made significant progress towards extending
\eqref{eq12} to the whole range $p \in (1, \infty)$. The final missing part was a
Fefferman-Stein type estimate for the one-parameter 
Christ-Goldberg maximal function $M_W$, see \cite[Question 7.3, Remark 8.14]{DKPS}.
They also showed that if in addition $T$ is \emph{paraproduct free}, then
$$
\| T f \|_{L^p(W)}
\lesssim \| f \|_{L^p(W)}
$$
holds for $p \in (1, \infty)$ and $W \in \calA_p(\R^{n_1} \times \R^{n_2})$.

In \cite{DKPS} the matrix weights are allowed to have
complex entries. In this paper and in \cite{BC} the matrix weights have real entries.
See the discussion of this question at the end of the Introduction of \cite{DKPS}.
Two-weight estimates with bi-parameter matrix weights 
are also considered in \cite{DKPS};
see \cite[Theorem 1.1]{DKPS}.

The bi-parameter extrapolation, Theorem \ref{thm:BiParExtrap},
extends the estimate \eqref{eq12} to the whole range $p \in (1, \infty)$.

\begin{thm}\label{thm:BiParJourne}
Suppose that $T$ is a bi-parameter Journ\'e operator. Let
$p \in (1, \infty)$ and $W \in \calA_p(\R^{n_1} \times \R^{n_2})$ be a bi-parameter matrix weight.
Then the estimate
$$
\| T f \|_{L^p(W)}
\lesssim \| f \|_{L^p(W)}
$$
holds.
\end{thm}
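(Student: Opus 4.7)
The plan is to deduce Theorem \ref{thm:BiParJourne} from the bi-parameter extrapolation theorem (Theorem \ref{thm:BiParExtrap}) applied at base exponent $p_0 = 2$, taking as input the $p=2$ matrix weighted estimate \eqref{eq12} proved in \cite{DKPS}.

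Concretely, let $T$ be a fixed bi-parameter Journ\'e operator and let $\calD$ denote a class of test functions $u \colon \R^{n_1}\times\R^{n_2} \to \R^d$ on which $Tu$ is unambiguously defined and locally integrable (for instance vector-valued smooth functions with compact support). I would set up the family of extrapolation pairs
$$
\calF = \{(Tu, u) : u \in \calD\}.
$$
The input hypothesis of Theorem \ref{thm:BiParExtrap} at $p_0 = 2$ is then exactly the content of \eqref{eq12} as recalled from \cite{DKPS}: for every $W \in \calA_2(\R^{n_1} \times \R^{n_2})$,
$$
\| Tu \|_{L^2(W)} \le K_2\big([W]_{\calA_2(\R^{n_1} \times \R^{n_2})}\big)\, \| u \|_{L^2(W)}, \quad (Tu,u) \in \calF,
$$
with the qualification $\|Tu\|_{L^2(W)} < \infty$, and with $K_2$ increasing in $[W]_{\calA_2}$ (this monotonicity comes for free by replacing $K_2$ with its increasing envelope). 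Theorem \ref{thm:BiParExtrap} then produces, for every $p \in (1, \infty)$ and every $W \in \calA_p(\R^{n_1}\times \R^{n_2})$, the bound
$$
\| Tu \|_{L^p(W)} \le K\big([W]_{\calA_p(\R^{n_1} \times \R^{n_2})}\big)\, \| u \|_{L^p(W)}, \quad u \in \calD,
$$
again subject to the a priori finiteness of the left-hand side.

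The only remaining work is to remove the a priori finiteness qualification and to extend the inequality from $\calD$ to all $f \in L^p(W)$; this is entirely standard. A truncation of $Tu$ (replacing it, say, by $Tu \cdot 1_{\{|Tu| \le N\}} 1_{B(0,N)}$) automatically has finite $L^p(W)$ norm, so the extrapolation estimate applies to the truncated pair and monotone convergence as $N \to \infty$ then yields the unqualified bound on $\calD$. Density of $\calD$ in $L^p(W)$ together with the linearity of $T$ extends the estimate to all of $L^p(W)$. I do not foresee any genuine obstacle: all the analytic content has been absorbed into Theorems \ref{thm:StrongConvex} and \ref{thm:BiParExtrap} and into the input \eqref{eq12} from \cite{DKPS}, so the proof reduces to a textbook application of Rubio de Francia extrapolation in the spirit of \cite{CMP_Book} and \cite[Section 9]{BC}.
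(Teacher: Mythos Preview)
Your proposal is correct and matches the paper's approach exactly: the paper simply observes that Theorem \ref{thm:BiParExtrap} applied at $p_0=2$ to the input \eqref{eq12} from \cite{DKPS} yields the result, without even spelling out the extrapolation-pair and density technicalities you mention. Those technicalities are standard and your treatment of them is fine.
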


\bibliography{MatrixWeights}

\begin{thebibliography}{10}

\bibitem{AF}
J.-P. Aubin and H.~Frankowska.
\newblock {\em Set-valued analysis}.
\newblock Birkhäuser Boston, Inc., 1990.

\bibitem{BC}
M.~Bownik and D.~Cruz-Uribe.
\newblock {E}xtrapolation and {F}actorization of matrix weights.
\newblock {\em Preprint}, 2022.
\newblock arXiv:2210.09443.

\bibitem{CG}
M.~Christ and M.~Goldberg.
\newblock Vector {$A_2$} weights and a {H}ardy-{L}ittlewood maximal function.
\newblock {\em Trans. Amer. Math. Soc.}, 353(5):1995--2002, 2001.

\bibitem{CMP_Book}
D.~V. Cruz-Uribe, J.~M. Martell, and C.~P\'{e}rez.
\newblock {\em Weights, extrapolation and the theory of {R}ubio de {F}rancia},
  volume 215 of {\em Operator Theory: Advances and Applications}.
\newblock Birkh\"{a}user/Springer Basel AG, Basel, 2011.

\bibitem{DKPS}
K.~Domelevo, S.~Kakaroumpas, S.~Petermichl, and O.~Soler~i Gibert.
\newblock Boundedness of {J}ourn\'{e} operators with matrix weights.
\newblock {\em J. Math. Anal. Appl.}, 532(2):Paper No. 127956, 66, 2024.

\bibitem{FR}
M.~Frazier and S.~Roudenko.
\newblock Matrix-weighted {B}esov spaces and conditions of {$A_p$} type for
  $0<p\leq1$.
\newblock {\em Indiana Univ. Math. J.}, 53:1225--1254, 2004.

\bibitem{Goldberg}
M.~Goldberg.
\newblock Matrix {$A_p$} weights via maximal functions.
\newblock {\em Pacific J. Math.}, 211:201--220, 2003.

\bibitem{Grafakos}
L.~Grafakos.
\newblock {\em Classical {F}ourier analysis}, volume 249 of {\em Graduate Texts
  in Mathematics}.
\newblock Springer, New York, third edition, 2014.

\bibitem{IM}
J.~Isralowitz and K.~Moen.
\newblock Matrix weighted {P}oincar\'{e} inequalities and applications to
  degenerate elliptic systems.
\newblock {\em Indiana Univ. Math. J.}, 68:1327--1377, 2019.

\bibitem{Martikainen}
H.~Martikainen.
\newblock Representation of bi-parameter singular integrals by dyadic
  operators.
\newblock {\em Adv. Math.}, 229(3):1734--1761, 2012.

\bibitem{NPTV}
F.~Nazarov, S.~Petermichl, S.~Treil, and A.~Volberg.
\newblock Convex body domination and weighted estimates with matrix weights.
\newblock {\em Adv. Math.}, 318:279--306, 2017.

\bibitem{NPST}
F.~Nazarov, S.~Petermichl, K.~A. \v{S}kreb, and S.~Treil.
\newblock The matrix-weighted dyadic convex body maximal operator is not
  bounded.
\newblock {\em Adv. Math.}, 410:Paper No. 108711, 20, 2022.

\bibitem{Roudenko}
S.~Roudenko.
\newblock Matrix-weighted {B}esov spaces.
\newblock {\em Trans. Amer. Math. Soc.}, 355:273--314, 2003.

\end{thebibliography}
\bibliographystyle{abbrv}

\end{document}